\title{Tree decompositions and many-sided separations}
\author{Tara Abrishami $^{\ast \dagger}$}
\address{$^{\ast}$Princeton University, Princeton, NJ, USA}
\address{$^{\dagger}$ Supported by NSF Grant DMS-1763817 and
     NSF-EPSRC Grant DMS-2120644.}
\renewcommand{\S}{\mathcal{S}}
\newtheorem{lemma}{Lemma}
\newtheorem{theorem}{Theorem}
\newcounter{tbox}
\newcommand{\sta}[1]{\vspace*{0.3cm}\refstepcounter{tbox}\noindent{ \parbox{\textwidth}{(\thetbox) \emph{#1}}}\vspace*{0.3cm}}
\begin{document}

\maketitle
\begin{abstract}
    A {\em separation} of a graph $G$ is a partition $(A_1, A_2, C)$ of $V(G)$ such that $A_1$ is anticomplete to $A_2$. A classic result from Robertson and Seymour's Graph Minors Project states that there is a correspondence between tree decompositions and laminar collections of separations. A {\em many-sided separation} of a graph $G$ is a partition $(A_1, \hdots, A_k, C)$ of $V(G)$ such that $A_i$ is anticomplete to $A_j$ for all $1 \leq i < j \leq k$. In this note, we show a correspondence between tree decompositions with a certain parity property, called {\em deciduous} tree decompositions, and laminar collections of many-sided separations. 
\end{abstract}
\section{Introduction}

A {\em tree decomposition} $(T, \chi)$ of a graph $G$ consists of a tree $T$ and a map $\chi: V(T) \to 2^{V(G)}$, satisfying the following properties: 
\begin{enumerate}[\hspace{1.5em} (i)]
    \item For every $v \in V(G)$, there exists $t \in V(T)$ such that $v \in \chi(t)$, 
    \item For every $v_1v_2 \in E(G)$, there exists $t \in V(T)$ such that $v_1, v_2 \in \chi(t)$, 
    \item For every $v \in V(G)$, the support of $v$ in $(T, \chi)$ is connected. 
\end{enumerate}

The {\em width} of a tree decomposition $(T, \chi)$ is $\min_{t \in V(T)} |\chi(t)| - 1$. The {\em treewidth} of a graph $G$ is the minimum width of a tree decomposition of $G$. The sets $\chi(t)$ for $t \in V(T)$ are called the {\em bags} of the tree decomposition $(T, \chi)$. For a set $X \subseteq V(T)$, we let $\chi(X) = \bigcup_{t \in X} \chi(t)$. 

Let $G$ be a graph and let $X, Y \subseteq V(G)$. We say {\em $X$ is anticomplete to $Y$} if no edge of $G$ has one end in $X$ and one end in $Y$. A {\em separation} of $G$ is a triple $(A_1, A_2, C)$ such that: 1) $(A_1, A_2, C)$ is a partition of $V(G)$, i.e. $A_1, A_2, C \subseteq V(G)$, $A_1, A_2, C$ are pairwise disjoint, and $A_1 \cup A_2 \cup C = V(G)$; and 2) $A_1$ is anticomplete to $A_2$. Two separations $(A_1, A_2, C)$ and $(B_1, B_2, D)$ are {\em non-crossing} if there exist $1 \leq i, j \leq 2$ such that $A_{3-i} \cup C \subseteq B_{j} \cup D$ and $B_{3-j} \cup D \subseteq A_i \cup C$. 

Let $G$ be a graph and let $(T, \chi)$ be a tree decomposition of $G$. Let $e = t_1t_2$ be an edge of $E(T)$ and let $T_1, T_2$ be the connected components of $T \setminus \{e\}$ containing $t_1$ and $t_2$, respectively. Then, up to symmetry between $t_1$ and $t_2$, the {\em separation corresponding to $e$}, denoted $S_e = (A_1^e, A_2^e, C^e)$, is defined as follows: $C^e = \chi(t_1) \cap \chi(t_2)$, $A_1^e = \chi(T_1) \setminus C^e$, and $A_2^e = \chi(T_2) \setminus C^e$. The collection of separations corresponding to $(T, \chi)$, denoted $\tau((T, \chi))$ is defined as follows: $\tau((T, \chi)) =\{S_e \mid e \in E(T)\}$. 

A collection $\S$ of separations of $G$ is called {\em laminar} if the separations of $\S$ are pairwise non-crossing. The following result of Robertson and Seymour states that there is a correspondence between tree decompositions of $G$ and laminar collections of separations of $G$. 

\begin{theorem}[\cite{GMX}]
\label{thm:RS}
Let $G$ be a graph. Then, for every tree decomposition $(T, \chi)$ of $G$, $\tau((T, \chi))$ is laminar. Conversely, for every laminar collection of separations $S$ of $G$, there exists a tree decomposition $(T, \chi)$ of $G$ such that $\tau((T, \chi)) = S$.  
\end{theorem}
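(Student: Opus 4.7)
The plan is to prove the two directions separately: the forward direction by a direct combinatorial argument about how two edges of $T$ sit inside $T$, and the converse by induction on $|\S|$ via a split-along-one-separation strategy.

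For the forward direction, fix distinct edges $e = t_1 t_2$ and $f \in E(T)$, with $T_1, T_2$ the components of $T \setminus \{e\}$ (labeled so $t_i \in T_i$), and $S_1, S_2$ those of $T \setminus \{f\}$. Because $T$ is a tree, $f$ lies in $E(T_i)$ for exactly one $i$, so after relabeling $T_1 \subseteq S_1$, forcing $S_2 \subseteq T_2$. Taking $\chi$-images and using the identity $A_i^e \cup C^e = \chi(T_i)$ (which holds since $C^e \subseteq \chi(t_i) \subseteq \chi(T_i)$, and similarly for $f$), these set inclusions translate directly into $A_1^e \cup C^e \subseteq A_1^f \cup C^f$ and $A_2^f \cup C^f \subseteq A_2^e \cup C^e$, i.e.\ the non-crossing condition with $i = 2$, $j = 1$.

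For the converse, I would induct on $|\S|$, with base case $\S = \emptyset$ handled by the single-bag decomposition on $V(G)$. For the inductive step, pick any $s = (A_1, A_2, C) \in \S$. The non-crossing condition lets me partition $\S \setminus \{s\}$ into $\S_1 \cup \S_2$, where $s' = (B_1, B_2, D) \in \S_i$ if the non-crossing inclusion with $s$ has the form $B_{3-j} \cup D \subseteq A_i \cup C$. A crucial observation is that the other half of the same inclusion, $A_{3-i} \cup C \subseteq B_j \cup D$, forces $C \subseteq B_j \cup D$, so $B_{3-j} \cap C = \emptyset$. Define $G_i^+$ as $G[A_i \cup C]$ with $C$ turned into a clique. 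Each $s' \in \S_i$ restricts to a separation of $G_i^+$: because $B_{3-j} \cap C = \emptyset$, the new edges inside $C$ do not violate the anticomplete condition. The collection $\S_i$ is laminar in $G_i^+$ (inherited from $\S$), so by induction there is a tree decomposition $(T_i, \chi_i)$ of $G_i^+$ with $\tau((T_i, \chi_i)) = \S_i$; and because $C$ is a clique, a standard argument yields a node $t_i \in V(T_i)$ with $C \subseteq \chi_i(t_i)$.

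Finally, form $(T, \chi)$ by taking $T_1 \cup T_2$ with the new edge $t_1 t_2$ and letting $\chi$ agree with $\chi_i$ on $V(T_i)$. Axioms (i), (ii), (iii) on $G$ follow from those of $(T_i, \chi_i)$ on $G_i^+$ together with $\chi(t_1) \cap \chi(t_2) = C$ (and the fact that no edge of $G$ crosses from $A_1$ to $A_2$). For the correspondence: the edge $t_1 t_2$ yields exactly $s$, since $\chi(t_1) \subseteq A_1 \cup C$, $\chi(t_2) \subseteq A_2 \cup C$, and both contain $C$; and each edge of $T_i$ recovers the original separation in $\S$ because the ``other side'' of that edge in $T$ absorbs $A_{3-i}$ through the glue edge $t_1 t_2$. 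I expect the main obstacle to be this last step — showing that a restricted separation of $G_i^+$ reinflates to the correct original separation of $G$ after gluing — which is precisely what motivates both the clique trick on $C$ and the careful selection of the glue nodes $t_i$ with $C \subseteq \chi_i(t_i)$.
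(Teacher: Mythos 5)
First, note that the paper does not prove this statement at all: it is quoted from Robertson and Seymour \cite{GMX}, so your argument has to stand entirely on its own. Your forward direction does stand: the identity $A_i^e \cup C^e = \chi(T_i)$ is correct (since $C^e \subseteq \chi(t_i)$), and the containments $T_1 \subseteq S_1$, $S_2 \subseteq T_2$ translate exactly into the two inclusions required by the definition of non-crossing. No complaints there.

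The converse, however, has a genuine gap at precisely the step you flagged, and the fix you propose (choosing $t_i$ with $C \subseteq \chi_i(t_i)$) is not sufficient. The problem arises when some $s' = (B_1, B_2, D) \in \mathcal{S}_1$ satisfies $C \subseteq D$. In that case $C$ lies in the bags at \emph{both} ends of the edge $f$ of $T_1$ realizing the restriction of $s'$, so the condition $C \subseteq \chi_1(t_1)$ does not force $t_1$ onto the side of $f$ that must absorb $A_2$ after gluing. Concretely: let $G$ be the star with center $c$ and leaves $a_1, a_1', a_2$, and let $\mathcal{S} = \{s, s'\}$ with $s = (\{a_1, a_1'\}, \{a_2\}, \{c\})$ and $s' = (\{a_1\}, \{a_1', a_2\}, \{c\})$. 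This is laminar, and $s' \in \mathcal{S}_1$ with $B_j = \{a_1', a_2\}$; its restriction to $G_1^+$ is $(\{a_1'\}, \{a_1\}, \{c\})$, realized by the tree decomposition of $G_1^+$ with two adjacent bags $\{a_1, c\}$ and $\{a_1', c\}$. Both bags contain $C = \{c\}$, but if you pick $t_1$ to be the node with bag $\{a_1, c\}$ and glue $t_2$ (bag $\{a_2, c\}$) there, the old edge of $T_1$ now yields the separation $(\{a_1'\}, \{a_1, a_2\}, \{c\})$, which is neither $s'$ nor any member of $\mathcal{S}$; hence $\tau((T,\chi)) \neq \mathcal{S}$. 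To repair the argument you need a genuinely stronger selection principle for $t_1$ --- for instance, showing that for each such $s'$ the set of admissible glue nodes spans a subtree of $T_1$, that these subtrees together with the subtree of nodes whose bags contain $C$ pairwise intersect, and then invoking the Helly property of subtrees of a tree --- or else replace the gluing induction by a direct construction of $T$ from the partial order on oriented separations. As written, the last step of the converse does not go through.
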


In this note, we show that a similar correspondence exists for generalizations of separations called many-sided separations. 
A {\em many-sided separation of $G$} is a tuple $(A_1, A_2, \hdots, A_k, C)$, with $k \geq 2$, such that: 

\begin{enumerate}[\hspace{1.5em} (i)]
    \item $A_1, A_2, \hdots, A_k, C \subseteq V(G)$,
    \item $A_1, A_2, \hdots, A_k, C$ are pairwise disjoint, 
    \item $A_1 \cup \hdots \cup A_k \cup C = V(G)$, and
    \item for all $1 \leq i < j \leq k$, $A_i$ is anticomplete to $A_j$. 
\end{enumerate}

Many-sided separations offer a way to store more precise structural information than standard separations. To see this, consider the following example. Let $G$ be a connected graph and let $C \subseteq V(G)$ be such that $G \setminus C$ has $k$ connected components $A_1, \hdots, A_k$. The many-sided separation given by $(A_1, \hdots, A_k, C)$ includes each of the connected components of $G \setminus C$, thus storing full information about the structure of $G \setminus C$. This can also be represented by a standard separation as follows: choose $\emptyset \subsetneq I \subset \{1, \hdots, k\}$, let $A_1' = \bigcup_{i \in I} A_i$, and let $A_2' = \bigcup_{j \in \{1, \hdots, k\} \setminus I} A_j$. Now, $(A_1', A_2', C)$ is a standard separation containing information about $G \setminus C$. Note that while the many-sided separation $(A_1, \hdots, A_k, C)$ is unique (up to symmetry between the order of the components), there are $2^{k-1} - 1$ possibilities for the standard separation $(A_1', A_2', C)$ (up to symmetry between $A_1'$ and $A_2'$). Further, each $A_i$ is connected in the many-sided separation $(A_1, \hdots, A_k, C)$, but no choice of $I$ allows for both $A_1'$ and $A_2'$ to be connected in the standard separation $(A_1', A_2', C)$. 

Separations are particularly powerful due to their relationship with deep structural concepts such as tree decompositions. In this paper, we show that there is a natural connection between many-sided separations and certain tree decompositions, extending the idea of Theorem \ref{thm:RS} to many-sided separations.

\section{Many-sided separations} 

We begin this section with some definitions related to many-sided separations. Two many-sided separations $(A_1, A_2, \hdots, A_k, C)$ and $(B_1, B_2, \hdots, B_\ell, D)$ are {\em non-crossing} if there exist $1 \leq i \leq k$ and $1 \leq j \leq \ell$, such that 
\begin{itemize} 

\item $\left(\bigcup_{1 \leq p \leq \ell} B_p \setminus B_j\right) \cup D \subseteq A_i \cup C$, and 
\item $\left(\bigcup_{1 \leq p \leq k} A_p \setminus A_i\right) \cup C \subseteq B_j \cup D$. 
\end{itemize}
Notice that when $k = \ell = 2$, the definition of non-crossing for many-sided separations is the same as the definition of non-crossing for separations. A collection $\S$ of many-sided separations is called {\em laminar} if the many-sided separations of $\S$ are pairwise non-crossing. 

We say that a tree $T$ is {\em deciduous} if for every two leaves $\ell_1, \ell_2$ of $T$, the path from $\ell_1$ to $\ell_2$ in $T$ has even length. A tree decomposition $(T, \chi)$ of a graph $G$ is {\em deciduous} if $T$ is deciduous. Let $G$ be a graph and let $(T, \chi)$ be a deciduous tree decomposition of $G$. Let $(X, Y)$ be a bipartition of $V(T)$ and assume up to symmetry that every leaf of $T$ is in $Y$. For $x \in X$, the {\em many-sided separation corresponding to $x$}, denoted $S_x = (A_1^x, \hdots, A_{\text{deg}(x)}^x, C)$, is defined as follows. Let $\deg(x) = k$ and let $y_1, \hdots, y_k$ be the neighbors of $x$ in $T$. Let $T_1, \hdots, T_k$ be the components of $T \setminus \{t\}$ containing $y_1, \hdots, y_k$, respectively. Then, $C^x = \chi(t)$, and for $1 \leq i \leq k$, $A_i^x = \chi(T_i) \setminus \chi(t)$. Now, the collection of many-sided separations corresponding to $(T, \chi)$, denoted $\tau^*((T, \chi))$, is defined as follows: $\tau^*((T, \chi)) = \{S_x \mid x \in X\}$. 

The goal of the next two theorem is to prove a result similar to Theorem \ref{thm:RS} for many-sided separations and deciduous tree decompositions. 


\begin{theorem}
\label{thm:S-to-TD}
Let $G$ be a graph and let $\S$ be a laminar collection of many-sided separations of $G$. Then, there exists a deciduous tree decomposition $(T, \chi)$ of $G$ such that $\tau^*((T, \chi)) = \S$. 
\end{theorem}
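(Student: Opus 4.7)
The plan is to construct $(T, \chi)$ directly from $\S$. I take $X = \{x_\sigma : \sigma \in \S\}$ with $\chi(x_\sigma) = C^\sigma$, and obtain the $Y$-vertices by partitioning the set of all arms $\{(\sigma, i) : \sigma \in \S,\ 1 \leq i \leq k_\sigma\}$ into equivalence classes dictated by laminarity. For each pair of distinct $\sigma, \sigma' \in \S$, the non-crossing condition supplies indices $i(\sigma, \sigma')$ and $j(\sigma, \sigma')$ witnessing that $\sigma'$ lies in arm $i(\sigma, \sigma')$ of $\sigma$ and $\sigma$ lies in arm $j(\sigma, \sigma')$ of $\sigma'$ (essentially unique once arms are non-empty). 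I will form an auxiliary graph $H$ on the set of arms by connecting $(\sigma, i(\sigma, \sigma'))$ to $(\sigma', j(\sigma, \sigma'))$ exactly when no $\sigma'' \in \S$ is \emph{between} $\sigma$ and $\sigma'$, meaning $i(\sigma'', \sigma) \neq i(\sigma'', \sigma')$. Let $Y$ be the set of connected components of $H$, make $x_\sigma$ adjacent to the component containing $(\sigma, i)$ for each arm $i$, and define $\chi([y]) = \bigcap_{(\sigma, i) \in [y]}(A_i^\sigma \cup C^\sigma)$; for a singleton component $\{(\sigma, i)\}$ this is exactly $A_i^\sigma \cup C^\sigma$, the expected leaf bag for an arm that contains no other separation.

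Once the construction is in place, the bipartite $(X, Y)$ structure is immediate, every $x_\sigma$ has degree $k_\sigma \geq 2$ (so no $X$-vertex is a leaf), and $T$ is deciduous. The identification $\tau^*((T,\chi)) = \S$ unwinds directly from the bag definition: removing $x_\sigma$ splits $T$ into $k_\sigma$ components, one per arm class incident to $x_\sigma$, and the intersection formula forces $\chi(T_i) \setminus C^\sigma = A_i^\sigma$. Connectedness of $T$ follows because any two elements of $\S$ are joined by a chain of pairwise adjacent (no-between) separations, yielding a path in $T$. For acyclicity, I would argue that along any hypothetical cycle $x_{\sigma_1} - [y_1] - x_{\sigma_2} - \cdots - x_{\sigma_n} - [y_n] - x_{\sigma_1}$, the first edge commits us to one specific arm of $\sigma_1$, and an induction along the chain shows each subsequent $\sigma_p$ remains inside that arm of $\sigma_1$; returning to $x_{\sigma_1}$ through a different arm then contradicts the non-crossing relation between $\sigma_1$ and $\sigma_n$.

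The main obstacle will be verifying that $(T, \chi)$ is a valid tree decomposition. The key combinatorial lemma is: if $\sigma''$ lies on the $x_\sigma$--$x_{\sigma'}$ path in $T$, then $C^\sigma \cap C^{\sigma'} \subseteq C^{\sigma''}$; this follows because a vertex in both $C^\sigma$ and $C^{\sigma'}$ is forced by the non-crossing conditions for the pairs $(\sigma, \sigma'')$ and $(\sigma', \sigma'')$ to lie in two different arms of $\sigma''$, hence, by pairwise disjointness of arms, in $C^{\sigma''}$. Using this lemma, connected support of any $v \in \bigcup_\sigma C^\sigma$ is immediate. For $v \notin \bigcup_\sigma C^\sigma$, I would show that $v$ occupies a unique nested tower of arms terminating at some innermost arm $(\sigma_0, i_0)$ with $\mathcal{S}(\sigma_0, i_0) = \emptyset$, pinning down the unique singleton leaf component whose bag contains $v$. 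Edge coverage is forced by the anticomplete property: the two endpoints of any $uv \in E(G)$ cannot lie in distinct arms of any $\sigma$, so they remain consistently co-located at every level and in particular share the bag of the leaf component at their innermost common arm (or share some $\chi(x_\sigma)$ if both lie in $C^\sigma$).
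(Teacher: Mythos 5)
Your construction is genuinely different from the paper's. The paper argues by induction on $|\mathcal{S}|$: it locates an extremal separation $S$ having an arm that contains all the other separations, builds a deciduous tree decomposition for $\mathcal{S}\setminus\{S\}$, and grafts a star for $S$ onto the tree at the vertex of the separation whose arm facing $S$ is maximal. You instead build $T$ globally in one pass: $X$-vertices for cutsets, $Y$-vertices for equivalence classes of arms under the ``facing with nothing between'' relation, with bags given by intersections. This is the many-sided analogue of the classical direct construction of a tree from a cross-free family; it makes the bipartite (hence deciduous) structure and the identity $\tau^*((T,\chi))=\mathcal{S}$ visible by construction, at the cost of having to verify connectedness, acyclicity, and the three tree-decomposition axioms by hand, all of which the induction absorbs into the inductive hypothesis. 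Your key lemma ($C^{\sigma}\cap C^{\sigma'}\subseteq C^{\sigma''}$ for $\sigma''$ on the path from $x_\sigma$ to $x_{\sigma'}$) and the edge-coverage argument via anticompleteness are correct and are the right tools.

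One step, however, is false as stated: the claim that a vertex $v\notin\bigcup_\sigma C^\sigma$ ``occupies a unique nested tower of arms terminating at some innermost arm $(\sigma_0,i_0)$ with $\mathcal{S}(\sigma_0,i_0)=\emptyset$, pinning down the unique singleton leaf component whose bag contains $v$.'' The arms containing $v$ need not be nested, and $v$ need not lie in any leaf bag. Take $G$ the path $v_1v_2v_3v_4v_5$ and $\mathcal{S}=\{\sigma,\sigma'\}$ with $\sigma=(\{v_1\},\{v_3,v_4,v_5\},\{v_2\})$ and $\sigma'=(\{v_5\},\{v_1,v_2,v_3\},\{v_4\})$. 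The two arms containing $v_3$ face each other, neither contains the other, and neither is free of separations; in your tree, $v_3$ appears only in the bag $(A_2^{\sigma}\cup C^{\sigma})\cap(A_2^{\sigma'}\cup C^{\sigma'})=\{v_2,v_3,v_4\}$ of the internal $Y$-vertex $\{(\sigma,2),(\sigma',2)\}$, and in no leaf bag. The repair is local: for each $\sigma$, the vertex $v$ selects the unique arm containing it and hence a component $D_\sigma$ of $T\setminus\{x_\sigma\}$; one checks that the subtrees $D_\sigma$ pairwise intersect, so by the Helly property for subtrees they have a common vertex, and since no $x_\sigma$ lies in $D_\sigma$ this common part is a single $Y$-vertex $[y]$ with $v\in A_i^{\sigma}$ for every arm $(\sigma,i)\in[y]$, whence $v\in\chi([y])$. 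With that substitution, both property (i) and the verification that $\chi(T_i)\setminus C^{\sigma}=A_i^{\sigma}$ go through, and the rest of your outline is sound.
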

\begin{proof}
We proceed by induction on $|\S|$. Suppose $|\S| = 1$ and let $\S = \{S\}$, where $S = (A_1, \hdots, A_k, C)$. Let $T = K_{1, k}$ with vertex $u$ of degree $k$ and vertices $v_1, \hdots, v_k$ adjacent to $u$, and let $\chi: V(T) \to 2^{V(G)}$ be defined as follows: $\chi(u) = C$ and $\chi(v_i) = A_i \cup C$ for $1 \leq i \leq k$. It is easy to verify that $(T, \chi)$ is a deciduous tree decomposition of $G$ and that $\tau^*((T, \chi)) = \S$. 

Now, suppose the statement holds for all collections $\S'$ with $|\S'| = n-1$, and consider a collection $\S$ with $|\S| = n$. Let $\S = \{S_1, \hdots, S_n\}$. Since $\S$ is laminar, there exists $S = (A_1, \hdots, A_k, C) \in \S$ and $1 \leq i \leq k$ such that for all $S' = (A_1', \hdots, A_{k'}', C
) \in \S \setminus \{S\}$, there exists $1 \leq j \leq k'$ with $\left(\bigcup_{1 \leq p \leq k'} A_p' \setminus A_j \right) \cup C' \subseteq A_i \cup C$. Let $S^* \in \S$ be such that the set $A_j' \cup C'$ with this property is maximal.

Let $\S' = \S \setminus \{S\}$. Let $(T', \chi')$ be a deciduous tree decomposition of $G$ such that $\tau^*((T', \chi')) = \S'$. Let $(X', Y')$ be a bipartition of $V(T')$ such that all the leaves of $T$ are in $Y$, let $x^* \in V(T')$ be such that $S_{x^*} = S^*$, and let $j$ be the neighbor of $x^*$ in $T$ such that if $T_j$ is the subtree of $T \setminus \{x^*\}$ containing $j$, then $\chi(T_j) \setminus \chi(x^*) = A_j'$. We create tree $T$ from $T'$ as follows: add vertex $x$ adjacent to $j$, and add vertices $y_1, \hdots, y_k$ adjacent to $x$ for $1 \leq p \neq i \leq k$. We create $\chi$ from $\chi'$ as follows: $\chi(v) = \chi'(v)$ for all $v \in V(T')$, $\chi(x) = C$, and $\chi(y_p) = A_p \cup C$ for all $1 \leq p \neq i \leq k$. Now, it is easy to verify that $(T, \chi)$ is a deciduous tree decomposition of $G$ with $\tau^*((T, \chi)) = \S$. 
\end{proof}

\begin{theorem}
Let $G$ be a graph and let $(T, \chi)$ be a deciduous tree decomposition of $G$. Then, $\tau^*((T, \chi))$ is laminar. 
\end{theorem}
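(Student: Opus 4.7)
The plan is to take any two many-sided separations $S_x, S_{x'} \in \tau^*((T, \chi))$ with $x, x' \in X$ distinct, and to exhibit indices $i$ and $j$ witnessing the non-crossing condition. I will read these off from the unique path between $x$ and $x'$ in $T$: let $y_i$ be the neighbor of $x$ on this path and let $y'_j$ be the neighbor of $x'$ on this path. Denote by $T_i$ and $T'_j$ the corresponding components of $T \setminus \{x\}$ and $T \setminus \{x'\}$ that appear in the definitions of $A_i^x$ and $A_j^{x'}$.

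The main structural observation is the vertex-set containment
\[
V(T) \setminus V(T'_j) \;\subseteq\; V(T_i).
\]
To see this, fix any $u \in V(T) \setminus V(T'_j)$: either $u = x'$, or $u$ lies in a component of $T \setminus \{x'\}$ other than $T'_j$. In both cases the unique $u$-to-$x$ path in $T$ first reaches $x'$, then follows the $x'$-to-$x$ path, and finally enters $x$ through the edge $xy_i$; hence $u$ belongs to the component of $T \setminus \{x\}$ containing $y_i$, namely $T_i$. By symmetry, $V(T) \setminus V(T_i) \subseteq V(T'_j)$.

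To convert this into the non-crossing condition on subsets of $V(G)$, I apply $\chi$. Axiom (iii) of a tree decomposition gives $\chi(T'_p) \cap \chi(T'_q) \subseteq \chi(x')$ for $p \neq q$, so the sets $A_p^{x'} = \chi(T'_p) \setminus \chi(x')$ are pairwise disjoint. Combining this disjointness with the displayed containment,
\[
\left(\bigcup_{1 \leq p \leq \deg(x')} A_p^{x'} \setminus A_j^{x'}\right) \cup C^{x'} \;=\; \chi\bigl(V(T) \setminus V(T'_j)\bigr) \;\subseteq\; \chi(V(T_i)) \;\subseteq\; A_i^x \cup C^x.
\]
The symmetric inclusion is obtained by interchanging the roles of $x$ and $x'$, which completes the verification of non-crossing.

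I expect the main obstacle to be purely bookkeeping: one must carefully unwind the definitions of $S_x$, $A_i^x$, $C^x$, and non-crossing, and invoke the tree-decomposition axioms to transport containments from $V(T)$ to $V(G)$. The deciduous hypothesis itself is needed only to make $\tau^*((T, \chi))$ well-defined via the bipartition $(X, Y)$ with all leaves in $Y$, and plays no further role in this direction of the correspondence.
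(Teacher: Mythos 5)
Your proposal is correct and follows essentially the same route as the paper: you choose the witnessing indices $i,j$ from the components of $T\setminus\{x\}$ and $T\setminus\{x'\}$ determined by the $x$--$x'$ path, establish the subtree containment (the paper phrases it as $T'_p\subseteq T_i$ for each $p\neq j$, you as $V(T)\setminus V(T'_j)\subseteq V(T_i)$), push it through $\chi$, and finish by symmetry. The only difference is presentational, so there is nothing substantive to add.
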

\begin{proof}
Let $S_1, S_2 \in \tau^*((T, \chi))$, where $S_1 = S_{x_1}$ and $S_2 = S_{x_2}$ for $x_1, x_2 \in X$. Let $y_1, \hdots, y_k$ be the neighbors of $x_1$ in $T$, and let $T_1, \hdots, T_k$ be the components of $T \setminus \{x_1\}$ containing $y_1, \hdots, y_k$, respectively. Let $i$ be such that $x_2 \in T_i$. Similarly, let $y_1', \hdots, y_{k'}'$ be the neighbors of $x_2$ in $T$, and let $T_1', \hdots, T_{k'}'$ be the components of $T \setminus \{x_2\}$ containing $y_1', \hdots, y_{k'}'$, respectively. Let $j$ be such that $x_1 \in T_{j}'$.  

Write $S_1 = (A_1, \hdots, A_k, C)$ and $S_2 = (A_1', \hdots, A_{k'}', C')$. We prove: 

\sta{\label{main-arg} For all $1 \leq p \leq k'$ with $p \neq j$, $A_p' \cup C' \subseteq A_i \cup C$.}

Since $x_2 \in T_i$ and $x_1 \in T_j'$, it follows that $T_p' \subseteq T_i$, and therefore that $\chi(T_p') \subseteq \chi(T_i)$. So $A_p' \cup C' \subseteq \chi(T_p') = A_i \cup C$. This proves \eqref{main-arg}. \\

By symmetry, it also holds that for all $1 \leq p \leq k$ with $p \neq i$, $A_p \cup C \subseteq A_j' \cup C'$. Therefore, $S_1$ and $S_2$ are non-crossing. This completes the proof of the lemma. 
\end{proof}

\section{Separations again}
In this section, we consider the following question. Let $G$ be a graph, let $\mathcal{S}^*$ be a laminar collection of many-sided separations of $G$, and let $(T, \chi)$ be a deciduous tree decomposition of $G$ such that $\tau^*((T, \chi)) = \mathcal{S}^*$ (as in Theorem \ref{thm:S-to-TD}). Since $(T, \chi)$ is a tree decomposition of $G$, it holds by Theorem \ref{thm:RS} that there is a laminar collection of (standard) separations $\mathcal{S}$ such that $\tau((T, \chi)) = \mathcal{S}$. What is the relationship between $\mathcal{S}^*$ and $\mathcal{S}$?

Let $S = (A_1, A_2, \hdots, A_k, C)$ be a many-sided separation of a graph $G$. The {\em separation projection} $\rho(S)$ of $S$ is a collection of $k$ separations defined as follows: $$\rho(S) = \{(A_i, \bigcup_{1 \leq j \neq i \leq k} A_j, C) \mid 1 \leq i \leq k\}.$$ Let $\S$ be a laminar collection of many-sided separations. The {\em separation projection $\rho(\S)$ of the collection $\S$} is defined as follows: 
$$\rho(\S) = \bigcup_{S \in \S} \rho(S).$$
\begin{lemma}
Let $G$ be a graph, let $\S$ be a laminar collection of many-sided separations of $G$, and let $\rho(\S)$ be the separation projection of $\S$. Then, $\rho(\S)$ is laminar.  
\end{lemma}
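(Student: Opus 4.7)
The plan is to take two arbitrary separations $T_1, T_2 \in \rho(\S)$, trace them back to many-sided separations $S_1, S_2 \in \S$ from which they arose by projection, and verify that $T_1$ and $T_2$ are non-crossing. The argument naturally splits according to whether $S_1 = S_2$ or not.

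First, when $S_1 = S_2 = (A_1, \ldots, A_k, C)$, the separations are $T_1 = (A_i, \bigcup_{p \neq i} A_p, C)$ and $T_2 = (A_{i'}, \bigcup_{p \neq i'} A_p, C)$; if $i \neq i'$ then taking both indices in the standard non-crossing definition to be $2$, the two required containments $A_i \cup C \subseteq (\bigcup_{p \neq i'} A_p) \cup C$ and $A_{i'} \cup C \subseteq (\bigcup_{p \neq i} A_p) \cup C$ are immediate. So all projections sharing an ancestor in $\S$ are pairwise non-crossing.

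Second, when $S_1 = (A_1, \ldots, A_k, C)$ and $S_2 = (B_1, \ldots, B_\ell, D)$ are distinct, I would invoke laminarity of $\S$ to obtain indices $i^* \leq k$ and $j^* \leq \ell$ satisfying the two containments in the definition of non-crossing many-sided separations. Write $T_1 = (A_i, \bigcup_{p \neq i} A_p, C)$ and $T_2 = (B_j, \bigcup_{q \neq j} B_q, D)$. I would then handle four subcases based on whether $i = i^*$ and whether $j = j^*$. The key auxiliary fact, used repeatedly, is that if $i \neq i^*$ then $A_i \subseteq \bigcup_{p \neq i^*} A_p$, and symmetrically for $j$. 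Combining this with the containments supplied by laminarity of $\S$ yields, in each of the four subcases, the correct choice of indices $a, b \in \{1,2\}$ witnessing non-crossing of $T_1$ and $T_2$. For instance, when $i = i^*$ and $j = j^*$ one takes $a = b = 1$ and reads the required containments directly off laminarity of $\S$; when $i \neq i^*$ and $j \neq j^*$ one takes $a = b = 2$ and chains $A_i \subseteq \bigcup_{p \neq i^*} A_p \subseteq B_{j^*} \cup D \setminus C \subseteq \bigcup_{q \neq j} B_q \cup D$, with the symmetric chain on the other side.

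The main obstacle, such as it is, is purely bookkeeping: keeping the four subcases organized and choosing $a,b$ consistently in each. No new structural idea beyond the observation that "being outside one part of a partition is the same as being in the union of the other parts" is required, and in particular nothing needs to be said about the tree decomposition produced by Theorem \ref{thm:S-to-TD} — the lemma is purely combinatorial about the projection operation.
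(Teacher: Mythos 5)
Your proposal is correct and follows essentially the same route as the paper: first check that projections arising from a single many-sided separation are pairwise non-crossing, then for projections of two distinct members of $\S$ invoke laminarity to get the witnessing indices and split into four subcases according to whether each projected part coincides with the witnessing part. The paper simply normalizes the projected indices to $1$ before casing on whether the laminarity indices equal $1$, which is the same bookkeeping you describe.
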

\begin{proof}
First, we show that $\rho(S)$ is laminar for $S \in \S$. Let $S = (A_1, \hdots, A_k, C)$ and let $S_1, S_2 \in \rho(S)$, with $S_1 = (A_1, \bigcup_{2 \leq i \leq k} A_i, C)$ and $S_2 = (A_2, \bigcup_{1 \leq i \neq 2 \leq k} A_i, C)$. Then, $A_2 \cup C \subseteq \left(\bigcup_{2 \leq i \leq k} A_i\right) \cup C$ and $A_1 \cup C \subseteq \left(\bigcup_{1 \leq i \neq 2 \leq k} A_i\right) \cup C$, so it follows that $S_1$ and $S_2$ are non-crossing. Therefore, $\rho(S)$ is laminar. 

Now, let $S, S' \in \S$ with $S = (A_1, \hdots, A_k, C)$ and $S' = (B_1, \hdots, B_\ell, D)$. Since $\S$ is laminar, it holds that $S, S'$ are non-crossing; let $1 \leq i \leq k$ and $1 \leq j \leq \ell$ be such that $\left(\bigcup_{1 \leq p \leq \ell} B_p \setminus B_j\right) \cup D \subseteq A_i \cup C$, and  $\left(\bigcup_{1 \leq p \leq k} A_p \setminus A_i\right) \cup C \subseteq B_j \cup D$. Let $S_1 = (A_1, \bigcup_{2 \leq p \leq k} A_p, C)$ and let $S_2 = (B_1, \bigcup_{2 \leq p \leq \ell} B_p, D)$. There are four cases. 
\begin{itemize} 
\item If $i \neq 1$, $j \neq 1$, then it follows that $A_1 \cup C \subseteq \bigcup_{2 \leq p \leq \ell} B_p \cup D$ and $B_1 \cup D \subseteq \bigcup_{2 \leq p \leq k} A_p \cup C$, so $S_1$ and $S_2$ are non-crossing. 

\item If $i \neq 1$, $j = 1$, then it follows that $A_1 \cup C \subseteq B_1 \cup D$ and $\bigcup_{2 \leq p \leq \ell} B_p \cup D \subseteq \bigcup_{2 \leq p \leq k} A_p \cup C$, so $S_1$ and $S_2$ are non-crossing. 

\item If $i =1$, $j \neq 1$, then it follows that $\bigcup_{2 \leq p \leq k} A_p \cup D \subseteq \bigcup_{2 \leq p \leq \ell} B_p \cup C$ and $B_1 \cup D \subseteq A_1 \cup C$, so $S_1$ and $S_2$ are non-crossing. 

\item Finally, if $i=1, j=1$, it follows that $\bigcup_{2 \leq p \leq k} A_p \cup C \subseteq B_1 \cup D$ and $\bigcup_{2 \leq p \leq \ell} B_p \cup D \subseteq A_1 \cup C$, so $S_1$ and $S_2$ are non-crossing. 
\end{itemize} 
We have shown that for all $S_1, S_2 \in \rho(\S)$, $S_1$ and $S_2$ are non-crossing, so $\rho(\S)$ is laminar. 
\end{proof}

Let $G$ be a connected graph. A set $C \subseteq V(G)$ is a {\em cutset of $G$} if $G \setminus C$ has at least two connected components. A cutset $C$ is {\em minimal} if for all $C' \subset C$, it holds that $G \setminus C'$ is connected. The following easy lemma states a property of minimal cutsets.

\begin{lemma}
Let $G$ be a connected graph and let $C$ be a minimal cutset of $G$. Let $A_1, \hdots, A_k$ be the connected components of $G \setminus C$, and let $c \in C$. Then, $c$ has a neighbor in $A_i$ for all $1 \leq i \leq k$. 
\end{lemma}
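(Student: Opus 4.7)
The plan is to argue by contradiction: suppose some $c \in C$ has no neighbor in some component $A_i$, and then show that $C' = C \setminus \{c\}$ is already a cutset of $G$, contradicting the minimality of $C$.

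To carry this out, I would first note that in $G \setminus C$, by definition every vertex of $A_i$ has all its $G$-neighbors inside $A_i \cup C$. Passing to $G \setminus C'$, the only vertex of $C$ that is restored is $c$. Since by assumption $c$ has no neighbor in $A_i$, every vertex of $A_i$ still has all of its $G \setminus C'$ neighbors inside $A_i$. Combined with the fact that $A_i$ was already connected in $G \setminus C$, this shows that $A_i$ is precisely a union of connected components (in fact one connected component) of $G \setminus C'$.

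Next, because $C$ is a cutset we have $k \geq 2$, so there exists another component $A_j$ with $j \neq i$, and $A_j$ is disjoint from $A_i$. Hence $A_i$ is a proper nonempty subset of $V(G \setminus C')$ that is closed under taking neighbors in $G \setminus C'$, which means $G \setminus C'$ is disconnected. Since $C' \subsetneq C$, this directly contradicts the assumption that $C$ is a minimal cutset, completing the proof.

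I do not anticipate a serious obstacle here: the only subtlety is being careful that restoring $c$ really cannot create a path from $A_i$ to any other $A_j$, which is immediate from the hypothesis that $c$ has no neighbor in $A_i$ (so $c$ cannot serve as the first internal vertex of any such path). The remainder is routine bookkeeping about connected components.
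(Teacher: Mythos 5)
Your proposal is correct and follows essentially the same argument as the paper: assume $c$ has no neighbor in $A_i$, observe that $A_i$ remains a connected component of $G \setminus (C \setminus \{c\})$, and conclude that $C \setminus \{c\}$ is still a cutset, contradicting minimality. No gaps; this matches the paper's proof.
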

\begin{proof}
Suppose that $c$ has no neighbor in $A_j$ for some $1 \leq j \leq k$. Let $C' = C \setminus \{c\}$ and consider $G \setminus C'$.  Since $A_j$ is a connected component of $G \setminus C$, it follows that $A_j$ is anticomplete to $G \setminus C$. Further, $c$ has no neighbor in $A_j$, it follows that $A_j$ is anticomplete to $G \setminus C'$. Therefore, $A_j$ is a connected component of $G \setminus C'$. Since $A_j \neq V(G) \setminus C'$, it follows that $G \setminus C'$ is not connected, contradicting that $C$ is a minimal cutset of $G$. This completes the proof.
\end{proof}

We can now prove the following theorem. 
\begin{theorem}
\label{thm:deciduous-minimal}
Let $G$ be a connected graph and let $\S$ be a laminar collection of many-sided separations of $G$. Suppose that for every $S \in \S$ with $S = (A_1, \hdots, A_k, C)$, it holds that $C$ is a minimal cutset of $G$ and $A_1, \hdots, A_k$ are the connected components of $G \setminus C$. Let $(T, \chi)$ be the deciduous tree decomposition of $G$ such that $\tau^*((T, \chi)) = \S$. Then, $\tau((T, \chi)) = \rho(\S)$. 
\end{theorem}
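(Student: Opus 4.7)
The plan is to match each edge of $T$ with exactly one separation in $\rho(\S)$. Since $T$ is a tree, it is bipartite; the deciduous hypothesis forces the canonical bipartition to be $(X, Y)$ with every leaf in $Y$, so every edge of $T$ has exactly one endpoint in $X$, and $|E(T)| = \sum_{x \in X}\deg(x)$. On the other hand, if $S_x = (A_1^x, \ldots, A_k^x, C^x)$ with $k = \deg(x)$, then $\rho(S_x)$ consists of $k$ separations, one for each index $i$. I will define a map from $E(T)$ to $\rho(\S)$ by sending the edge $e = xy_i$, with $x \in X$ and $y_i$ the $i$-th neighbor of $x$, to the $i$-th separation $(A_i^x, \bigcup_{j \neq i} A_j^x, C^x)$ of $\rho(S_x)$, and show that this map agrees with $e \mapsto S_e$. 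Surjectivity onto $\rho(\S)$ is then built into the construction.

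The key reduction is the containment $\chi(x) \subseteq \chi(y_i)$ for every $x \in X$ and every neighbor $y_i$ of $x$ in $T$. Given this, the cutset of $S_e$ is $\chi(x) \cap \chi(y_i) = \chi(x) = C^x$. Writing $T_i$ for the component of $T \setminus \{x\}$ containing $y_i$, the $y_i$-side of $S_e$ is $\chi(T_i) \setminus C^e = A_i^x$ by definition of $S_x$, and the $x$-side is $\chi(V(T) \setminus V(T_i)) \setminus C^e$; since $V(T) \setminus V(T_i) = \{x\} \cup \bigcup_{j \neq i} V(T_j)$, this equals $\bigcup_{j \neq i} A_j^x$. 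So $S_e$ is precisely the $i$-th separation in $\rho(S_x)$, which yields $\tau((T, \chi)) = \rho(\S)$.

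To prove the containment $\chi(x) \subseteq \chi(y_i)$, I would take $c \in \chi(x) = C^x$ and apply the minimal cutset lemma just above the statement to obtain a neighbor $a \in A_i^x$ of $c$ in $G$. By property (ii) of tree decompositions, some bag $\chi(t)$ contains both $c$ and $a$. Since $a \in A_i^x = \chi(T_i) \setminus \chi(x)$, the support of $a$ avoids $x$ and meets $T_i$, so by connectedness of the support we get $t \in T_i$. Then the support of $c$ contains both $x$ and $t$, hence contains the unique $xt$-path in $T$, which passes through $y_i$; so $c \in \chi(y_i)$. The main obstacle is precisely this containment, and it is the only place where the minimal cutset hypothesis is invoked; the remainder of the proof is a direct unpacking of definitions.
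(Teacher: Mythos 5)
Your proposal is correct and follows essentially the same route as the paper: both reduce the theorem to showing $\chi(x) \subseteq \chi(y_i)$ via the minimal-cutset lemma (pick $c \in C$, find a neighbor $a \in A_i^x$, locate a bag containing both in $T_i$, and use connectedness of the support of $c$), and then identify $S_{xy_i}$ with the $i$-th separation in $\rho(S_x)$. Your write-up is in fact slightly more explicit than the paper's at the step where connectedness of the support of $c$ forces $c \in \chi(y_i)$.
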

\begin{proof}
Let $(X, Y)$ be a bipartition of $T$ and let $Y$ be the partite set containing the leaves of $T$. Then, every edge of $T$ is of the form $xy$ for $x \in X$, $y \in Y$. Let $x \in X$ and let $y_1, \hdots, y_k$ be the neighbors of $x$ in $T$, and let $S_x = (A_1, \hdots, A_k, C)$. 

\sta{\label{step-1} $\{S_{xy_i} \mid 1 \leq i \leq k\} = \rho(S_x)$.} 

Let $e = xy_i$ and let $T_1$, $T_2$ be the components of $T \setminus \{e\}$ containing $x$ and $y_i$, respectively. Then, $C^e = \chi(x) \cap \chi(y_i)$, $A_1^e = \chi(T_1) \setminus C^e$, and $A_2^e = \chi(T_2) \setminus C^e$. Since $\tau^*((T, \chi)) = \mathcal{S}$, it holds that $\chi(x) = C$. Let $c \in C$. Since $C$ is a minimal cutset, it follows that there exists $a \in A_i = \chi(T_2) \setminus \chi(x)$ such that $ac$ is an edge of $G$. By the definition of tree decomposition, there exists $t \in V(T)$ such that $\{a, c\} \subseteq \chi(t)$. Since $a \in \chi(T_2) \setminus \chi(x)$, it holds that $t \in V(T_2)$. Therefore, $c \in \chi(T_2)$, and thus $c \in \chi(y_i)$. It follows that $\chi(x) \subseteq \chi(y_i)$, and so $C^e = C$. Now, $S_e = (A_i, \bigcup_{1\leq j \neq i \leq k} A_j, C)$, and so $\{S_{xy_i} \mid 1 \leq i \leq k\} = \rho(S_x)$. This proves \eqref{step-1}.\\ 

For every $x \in X$, let $\S_x = \{S_{xy_i} \mid y_i \in N(x)\}$. Since every edge of $T$ is incident with a vertex in $X$, it follows that $\bigcup_{x \in X} \S_x = \tau((T, \chi))$. By \eqref{step-1}, it follows that $\bigcup_{x \in X} \S_x = \bigcup_{S \in \S} \rho(S) = \rho(\S)$. Therefore, $\tau((T, \chi)) = \rho(S)$. 
\end{proof}

\section*{Acknowledgment}
The author thanks Paul Seymour for asking about how many-sided separations relate to tree decompositions, and Maria Chudnovsky for helpful discussions.

\end{document}